\newtheorem{theorem}{Theorem}
\theoremstyle{plain}
\newtheorem{definition}{Definition}
\newtheorem{example}{Example}
\newtheorem{remark}{Remark}
\numberwithin{equation}{section}
\begin{document}
\title[Remarks on $b$-enriched nonexpansive mappings]{ Remarks on $b$%
-enriched nonexpansive mappings}
\author{Rizwan Anjum$^{*}$}
\address{Rizwan Anjum, Department of Mathematics, Division of Science and
Technology, University of Education, Lahore 54770, Pakistan.}
\email{rizwananjum1723@gmail.com}
\author{Mujahid Abbas}
\address{Department of Mechanical Engineering Sciences, Faculty of Engineering and the Built Environment, Doornfontein Campus, University of Johannesburg, South Africa}
\email{mujahida@uj.ac.za}
\subjclass[2000]{ 47H09. 47H10\ }
\keywords{fixed point, fixed point theorem, $b$-enriched nonexpansive\\
Corresponding author: Rizwan Anjum (rizwananjum1723@gmail.com) }

\begin{abstract}
In this note, we analyzed the concept of enriched nonexpansive which was proposed  in ``Approximating fixed points of enriched
nonexpansive mappings by Krasnoselskij iteration in Hilbert spaces''
(Carpathian J. Math., 35(2019), No. 3, 293-304.) Through our analysis, we
conclude that the idea of enriched nonexpansive needs reconsideration, as it
coincides with well known concept of nonexpansive. Our
findings provide an insights into the existing literature and
highlight the need for further investigations and clarifications in the existing literature on a metric-fixed
point theory.
\end{abstract}

\maketitle

\section{Introduction}

During the 1960s, there was a notable upsurge in the investigation of
nonexpansive mappings, largely propelled by two significant factors. One
catalyst was Browder's exploration of the interplay between monotone
operators and nonexpansive mappings \cite{Browder1,Browder2,Browder3,Browder4,Browder5,Browder6}, which sparked
considerable interest in the field. Additionally, Kirk's seminal paper \cite%
{kirk} investigated the crucial role of the geometric properties of the norm
in establishing the existence of fixed points for nonexpansive mappings.

Let $C$ be a nonempty closed and convex subset in a real Hilbert space $H$
equipped with a norm $\Vert \cdot \Vert $ derived from an inner product $%
\big<\cdot ,\cdot \big>$. A mapping $T:C\rightarrow C$ is called
nonexpansive if it satisfies the contractive condition given by:
\begin{equation}
\Vert Tx-Ty\Vert \leq \Vert x-y\Vert ,\ \ x,y\in C.  \label{nonex}
\end{equation}

The study of nonexpansive mappings is a significant and actively researched
topic in nonlinear analysis. The reader interested in fixed point theory of
nonexpansive mappings is referred to \cite%
{Berinde,Chidume,Geobel,R22,R222,R2222,s} and references mentioned therein.%
\newline

Next, we state the Browder-Goede-Kirk fixed point theorem for nonexpansive
mappings, originally presented as Theorem 4 in \cite{Browder6} and
reproduced as Theorem 3.1 in \cite{Berinde}.

\begin{theorem}
\label{BGK} Let $C$ be a closed bounded convex subset of the Hilbert space $%
H $ and $T:C\rightarrow C$ a nonexpansive mapping. Then $T$ has at least one
fixed point.
\end{theorem}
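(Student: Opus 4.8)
The plan is to reproduce the classical Browder--G\"ohde--Kirk argument, which rests on three ingredients: a contraction-approximation step that produces an approximate fixed point sequence, weak compactness of $C$, and the demiclosedness of $I-T$ at $0$. First I would fix an arbitrary point $x_0\in C$ and, for each $t\in(0,1)$, consider the averaged mapping $T_t x=(1-t)x_0+tTx$. Since $C$ is convex, $T_t$ maps $C$ into $C$; and since $T$ is nonexpansive, $\Vert T_t x-T_t y\Vert=t\Vert Tx-Ty\Vert\le t\Vert x-y\Vert$, so $T_t$ is a Banach contraction on the (complete) set $C$. Hence $T_t$ has a unique fixed point $x_t\in C$, and
\[
\Vert x_t-Tx_t\Vert=(1-t)\Vert x_0-Tx_t\Vert\le (1-t)\,\mathrm{diam}(C).
\]
Choosing a sequence $t_n\uparrow 1$ and writing $x_n:=x_{t_n}$ then gives $\Vert x_n-Tx_n\Vert\to 0$; that is, $(x_n)$ is an approximate fixed point sequence for $T$ contained in the bounded set $C$.

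Next I would extract a weak limit. Since $C$ is a closed, bounded, convex subset of a Hilbert space, it is convex and norm-closed, hence weakly closed, and bounded in a reflexive space, hence weakly compact. Therefore some subsequence (not relabelled) satisfies $x_n\rightharpoonup p$ for some $p\in C$. It remains to show that $p$ is a fixed point of $T$, and here the Hilbert-space geometry enters through the demiclosedness of $I-T$ at $0$. Suppose, for contradiction, that $Tp\neq p$. Expanding with the inner product,
\[
\Vert x_n-Tp\Vert^2=\Vert x_n-p\Vert^2+2\langle x_n-p,\;p-Tp\rangle+\Vert p-Tp\Vert^2,
\]
and since $\langle x_n-p,\,p-Tp\rangle\to 0$ by weak convergence, we get $\liminf_n\Vert x_n-Tp\Vert^2=\liminf_n\Vert x_n-p\Vert^2+\Vert p-Tp\Vert^2>\liminf_n\Vert x_n-p\Vert^2$. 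On the other hand, nonexpansiveness together with $\Vert x_n-Tx_n\Vert\to 0$ yields $\Vert x_n-Tp\Vert\le\Vert x_n-Tx_n\Vert+\Vert Tx_n-Tp\Vert\le\Vert x_n-Tx_n\Vert+\Vert x_n-p\Vert$, so $\liminf_n\Vert x_n-Tp\Vert\le\liminf_n\Vert x_n-p\Vert$ — a contradiction. Hence $Tp=p$, and $p\in C$ is the desired fixed point.

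The main obstacle is the last step. The contraction-approximation and weak-compactness parts are soft and use only convexity, boundedness and reflexivity; the place where ``mere'' metric completeness fails, and where the geometry of the norm is genuinely needed, is the demiclosedness/Opial-type argument, which here is powered by the parallelogram law (equivalently, by Opial's property of Hilbert space). Isolating and carefully justifying this dependence on the norm geometry — precisely the phenomenon highlighted in Kirk's work — is the crux of the proof; everything else is routine.
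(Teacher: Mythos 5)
Your proof is correct. Note, however, that the paper itself does not prove this theorem at all: it is quoted as a known result (Theorem 4 of Browder--Petryshyn, reproduced as Theorem 3.1 in Berinde's monograph), so there is no proof in the paper to compare against. What you have written is the standard self-contained argument: the contraction approximation $T_t x=(1-t)x_0+tTx$ yielding an approximate fixed point sequence, weak sequential compactness of a closed bounded convex set in a Hilbert space, and the demiclosedness of $I-T$ at $0$ via the parallelogram/Opial identity. The only step requiring care, the $\liminf$ bookkeeping in the final contradiction, is handled correctly: the term $2\langle x_n-p,\,p-Tp\rangle$ tends to $0$ by weak convergence, so it does not disturb the $\liminf$, and $\liminf a_n^2=(\liminf a_n)^2$ for the nonnegative sequences involved. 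So your argument stands as a complete proof of a statement the paper merely cites.
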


Berinde in \cite{9} introduced and examined a new class of mappings known as
the class of enriched nonexpansive mappings, which broadens the scope of
nonexpansive mappings.

\begin{definition}
\cite{9}\label{Def} Let $\big(X,\|\cdot\|\big)$ be a normed space. A mapping
$T:X\rightarrow X$ is called an enriched nonexpansive or $b$-enriched
nonexpansive if there exists $b\in[0,+\infty)$ such that the following condition holds:
\begin{equation}  \label{enrichednon}
\left\Vert b(x-y)+Tx-Ty\right\Vert\leq (b+1) \left\Vert x-y\right\Vert,\ \
\forall \ x,y\in X.
\end{equation}
\end{definition}

It was stated in \cite{9} that the class of enriched nonexpansive mappings
is larger than the class of nonexpansive mappings. Indeed, if we take $b=0$
in (\ref{enrichednon}), then we obtain (\ref{nonex}).\newline

In \cite{9}, it was demonstrated that every enriched nonexpansive mapping
defined on a Hilbert space possesses a fixed point that can be approximated
using the Krasnoselskij iterative scheme employing the condition of the
additional property of demicompactness of the enriched nonexpansive mapping $%
T$.\newline
Now, we state the main result in \cite{9}.

\begin{theorem}
\cite{9}\label{Thm1} Let $C$ be a closed bounded convex subset of the
Hilbert space $H$ and $T:C\rightarrow C$ a $b$-enriched nonexpansive
mapping. Then $T$ has at least one fixed point.
\end{theorem}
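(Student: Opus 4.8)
The plan is to reduce to the classical Browder--Göhde--Kirk theorem (Theorem~\ref{BGK}) by passing to an averaged operator. Set $\lambda=\frac{1}{b+1}\in(0,1]$ and define $T_\lambda\colon C\to C$ by
\[
T_\lambda x=(1-\lambda)x+\lambda Tx .
\]
Since $C$ is convex, $T$ maps $C$ into itself, and $\lambda\in[0,1]$, the point $T_\lambda x$ is a convex combination of two points of $C$, hence $T_\lambda x\in C$; so $T_\lambda$ is a well-defined self-map of the closed bounded convex set $C$.

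Next I would check that $T_\lambda$ is nonexpansive. For $x,y\in C$ write $T_\lambda x-T_\lambda y=(1-\lambda)(x-y)+\lambda(Tx-Ty)$ and note $\frac{1-\lambda}{\lambda}=b$. Dividing by $\lambda>0$ gives
\[
\frac{1}{\lambda}\,\|T_\lambda x-T_\lambda y\|=\bigl\|\,b(x-y)+Tx-Ty\,\bigr\|\le (b+1)\,\|x-y\|=\frac{1}{\lambda}\,\|x-y\|,
\]
where the inequality is exactly the defining condition~\eqref{enrichednon} of a $b$-enriched nonexpansive map. Hence $\|T_\lambda x-T_\lambda y\|\le\|x-y\|$ for all $x,y\in C$, i.e.\ $T_\lambda$ is nonexpansive.

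Now I would apply Theorem~\ref{BGK} to $T_\lambda$: since $C$ is a closed bounded convex subset of the Hilbert space $H$ and $T_\lambda$ is nonexpansive, $T_\lambda$ has a fixed point $x^{*}\in C$. Translating this back to $T$: $T_\lambda x^{*}=x^{*}$ means $(1-\lambda)x^{*}+\lambda Tx^{*}=x^{*}$, that is $\lambda(Tx^{*}-x^{*})=0$, and since $\lambda>0$ this forces $Tx^{*}=x^{*}$. Thus $T$ has at least one fixed point.

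The argument has essentially no hard step: the only genuine decision is the choice of the averaging parameter $\lambda=1/(b+1)$, after which the verification that $T_\lambda$ is a nonexpansive self-map of $C$ is a one-line computation and the conclusion follows from the classical theorem. In fact this is precisely the observation around which the present note is built --- the passage $T\mapsto T_\lambda$ is lossless and reversible, so "$b$-enriched nonexpansive" carries no information beyond "nonexpansive" once averaging is allowed, and the theorem is no more general than Theorem~\ref{BGK}. One need only be mindful of the degenerate case $b=0$, where $\lambda=1$, $T_\lambda=T$, and the statement is immediate from Theorem~\ref{BGK}.
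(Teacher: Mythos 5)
Your proposal is correct and is essentially the paper's own argument: your averaged map $T_\lambda x=(1-\lambda)x+\lambda Tx$ with $\lambda=1/(b+1)$ is exactly the operator $S x=\frac{bx+Tx}{b+1}$ of Remark~\ref{wnu}, and the reduction to Theorem~\ref{BGK} followed by the back-translation $Sx^{*}=x^{*}\Rightarrow Tx^{*}=x^{*}$ is precisely Remark~\ref{rem2}. Your explicit check that $T_\lambda$ maps $C$ into $C$ by convexity is a small point the paper leaves implicit, but the route is the same.
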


By conducting a search on MathSciNet, it is observed that there are 105
papers indexed with the term "enriched nonexpansive" in their titles (see
\cite{1,2,3,4,5,6,7,8,9,10,11,12,13,14,15,16,17,18,19,20,21,22,23,BerindeNEw}).
Additionally, there are 2,350 papers that mentioned the term "enriched
nonexpansive" anywhere in the text, as of March 16, 2024.

The main objective of this paper is to demonstrate that the class of
enriched nonexpansive mappings does not contribute much, as it does not go
away from the class of nonexpansive mappings. Therefore, as a conclusion, we
\ suggest the researchers in the field of metric fixed point theory to
reconsider Theorem \ref{Thm1} for additional exploration. Our results
investigate significant review of the current literature and the need to
reconsider the continued investigation in this direction.

\section*{Remarks on $b$-enriched nonexpansive mappings}

In this section, we prove that the fixed point of enriched nonexpansive
mappings can be obtained by using the fixed point theory of nonexpansive
mappings.\newline

The subsequent remark provides the justification of the above comment.

\begin{remark}
\label{wnu} Any $b$-enriched nonexpansive (\ref{enrichednon}) can be
simplified as follows:
\begin{align*}
\left\Vert b(x-y)+Tx-Ty\right\Vert & \leq (b+1)\left\Vert x-y\right\Vert \\
\frac{1}{b+1}\left\Vert b(x-y)+Tx-Ty\right\Vert & \leq \left\Vert
x-y\right\Vert \\
\left\Vert \frac{b(x-y)+Tx-Ty}{b+1}\right\Vert & \leq \left\Vert
x-y\right\Vert \\
\left\Vert \frac{bx+Tx}{b+1}-\frac{by+Ty}{b+1}\right\Vert & \leq \left\Vert
x-y\right\Vert .
\end{align*}%
Therefore, the $b$-enriched nonexpansive condition (\ref{enrichednon})
reduces to the condition:
\begin{equation}
\left\Vert Sx-Sy\right\Vert \leq \left\Vert x-y\right\Vert ,\text{ for all }%
x,y\in X  \label{NewSe}
\end{equation}%
where $S:X\rightarrow X$ is defined as:
\begin{equation}
Sx=\frac{bx+Tx}{b+1},\ \ \ \forall x\in X.  \label{Sxxxx}
\end{equation}%
It follows from (\ref{NewSe}) that the mapping $S$ defined in (\ref{Sxxxx})
is a nonexpansive mapping.
\end{remark}

In conclusion, the $b$-enriched nonexpansive condition (\ref{enrichednon})
can be rephrased to a nonexpansive mapping contractive condition. We refer
readers to the proof of Theorem \ref{Thm1}, where it was established that
every $b$-enriched nonexpansive mapping possesses at least one fixed point (
see, Theorem 2.2 of \cite{9}). Through Remark \ref{wnu}, we show that the
set of fixed points of $S$ is equal to the set of fixed point of $T$.

\begin{remark}
\label{rem2} From Remark \ref{wnu}, it can be observed that every $b$%
-enriched nonexpansive mapping $T$ can be transformed into a nonexpansive
mapping $S$ as defined in (\ref{Sxxxx}). By applying Theorem \ref{BGK}, we
know that $S$ possesses at least one fixed point, denote it by $x^{\ast }\in
C$. This implies:
\begin{align*}
Sx^{\ast }& =\frac{bx^{\ast }+Tx^{\ast }}{b+1}=x^{\ast } \\
bx^{\ast }+Tx^{\ast }& =(b+1)x^{\ast } \\
bx^{\ast }+Tx^{\ast }& =bx^{\ast }+x^{\ast } \\
Tx^{\ast }& =x^{\ast }.
\end{align*}%
Therefore, $x^{\ast }$ is indeed the fixed point of $T$, thereby proving
Theorem \ref{Thm1}.
\end{remark}

\section*{Modification of $b$-enriched nonexpansive mappings}

In this section, we will give the modified definition of $b$-enriched
nonexpansive mappings and demonstrate fixed point theorems for such mappings.%
\newline
We now present the following concept.

\begin{definition}
\label{DefiNew} Let $\big(X,\Vert \cdot \Vert \big)$ be a normed space. A
mapping $T:X\rightarrow X$ is called an modified enriched nonexpansive or $b$%
-modified enriched nonexpansive if  there exists $b\in[0,+\infty)$ such that the following condition holds:
\begin{equation}
\left\Vert b(x-y)+Tx-Ty\right\Vert \leq \left\Vert x-y\right\Vert ,\ \
\forall \ x,y\in X.  \label{modifenrichednon}
\end{equation}
\end{definition}

The remark below illustrates that the definition of $b$-modified enriched
nonexpansive, as in \ref{DefiNew}, aligns accurately with the definition of
enriched nonexpansive as specified in the Definition \ref{Def}.

\begin{remark}
Any $b$-modified enriched nonexpansive (\ref{modifenrichednon}) can be
simplified as follows:
\begin{equation}
\left\Vert Sx-Sy\right\Vert \leq \left\Vert x-y\right\Vert ,\text{ for all }%
x,y\in X  \label{NewSess}
\end{equation}%
where $S:X\rightarrow X$ is defined as:
\begin{equation}
Sx=bx+Tx,\ \ \ \forall x\in X.  \label{Sxxxxss}
\end{equation}%
We are interested in the conditions under which a mapping $T$ satisfying (%
\ref{modifenrichednon}) has at the least one fixed point. It has been
observed that the contractive condition satisfied by $b$-modified enriched
nonexpansive can be reformulated as shown in (\ref{NewSess}) to obtain a
nonexpansive mappings $S$ (\ref{Sxxxxss}).  The Browder-Goede-Kirk
fixed-point theorem \ref{BGK} guarantees the existence of  at least one
fixed point of a nonexpansive mapping under certain conditions. Note that,
if $x^{\ast }\in X$ is the fixed point of $S$, then  $x^{\ast
}(1-b)=Tx^{\ast }$ implies that $x^{\ast }$ is not the fixed point of $T$.
This was not the case with the definition of enriched nonexpansive mappings
as specified in Definition \ref{Def} and mentioned in Remarks \ref{wnu} and %
\ref{rem2}.
\end{remark}

Clearly, the class of $b$-modified enriched nonexpansive mappings includes
the class of enriched nonexpansive mappings; that is, every mapping
satisfying (\ref{nonex}) reduces to (\ref{modifenrichednon}) for $b=0.$%
\newline
Now, we present the examples of mapping which are $b$-modified enriched
nonexpansive mappings but not a nonexpansive mappings.

\begin{example}
Let $X=\mathbb{R}$ be the set of real numbers equipped with the usual norm,
and  $T:X\rightarrow X$ be defined by $Tx=100-2x$ for all $x\in \mathbb{R}$.
Obviously, $T$ is a $3$-modified enriched nonexpansive mapping. Indeed, for
all $x,y\in \mathbb{R}$, we have:
\begin{align*}
\left\Vert 3(x-y)+Tx-Ty\right\Vert & =\left\Vert
3(x-y)+(100-2x)-(100-2y)\right\Vert  \\
& =\left\Vert 3x-3y+100-2x-100+2y\right\Vert  \\
& =\left\Vert x-y\right\Vert .
\end{align*}
\end{example}

\begin{example}
Let $X=\mathbb{R}$, and $b\in (0,+\infty )$ is a fixed real number. Let $%
T_{b}:X\rightarrow X$ be defined by $T_{b}x=(1-b)x$ for all $x\in \mathbb{R}$%
. Then $T_{b}$ is a $b$-modified enriched nonexpansive mapping. Indeed, for
all $x,y\in \mathbb{R}$, we have:
\begin{align*}
\left\Vert b(x-y)+T_{b}x-T_{b}y\right\Vert & =\left\Vert
b(x-y)+(1-b)x-(1-b)y\right\Vert  \\
& =\left\Vert bx-by+x-bx-y+by\right\Vert  \\
& =\left\Vert x-y\right\Vert .
\end{align*}
\end{example}

Our first result shows that in order to prove the existence of fixed points
of $b$-modified enriched nonexpansive mappings, we do not require the rich
geometric structure of a nonempty closed and convex subset in a real Hilbert
space, but this only holds when $b\neq 0.$ Furthermore, a unique fixed point
exists. We establish these results within the framework of a Banach space.%
\newline
Before we present our main theorem, we need the following concept from \cite%
{7}.

\begin{remark}
\label{7sssssssssssssss}\cite{7} If $T$ is a self mapping on a normed space
space $X$, then for any $\lambda \in (0,1)$, an averaged mapping $T_{\lambda
}$ given by
\begin{equation}
T_{\lambda }x=(1-\lambda )x+\lambda Tx,\ \forall x\in X.  \label{averg}
\end{equation}%
Moreover,
\begin{equation}
Fix(T_{\lambda })=\{x\in X:T_{\lambda }x=x\}=\{x\in X:Tx=x\}=Fix(T).
\label{sskkahahshh}
\end{equation}
\end{remark}

We start with following result.

\begin{theorem}
\label{MainThem1} Let $\left( X,\left\Vert \cdot \right\Vert \right) $ be
Banach space and $T:X\rightarrow X$ a $b$-modified enriched nonexpansive
mapping with $b\neq 0.$ Then

\begin{enumerate}
\item $Fix(T)=\{x^{*}\};$

\item there exists $\lambda \in (0,1) $ such that the Krasnoselskii
iteration associated to $T,$ that is, the sequence $\{x_{n}\}_{n=0}^{\infty
},$ given by
\begin{equation}
x_{n+1}=(1-\lambda) x_{n}+\lambda Tx_{n},\ \ n\geq 0,  \label{equationsss}
\end{equation}%
converges to $x^{\ast }$ for any initial guess $x_{0}\in X.$
\end{enumerate}
\end{theorem}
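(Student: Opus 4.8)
The plan is to reduce the statement to the Banach contraction principle by passing from $T$ to an averaged mapping. I would fix $\lambda = \frac{1}{b+1}$; since $b > 0$ this $\lambda$ lies in $(0,1)$, so the averaged mapping $T_{\lambda}x = (1-\lambda)x + \lambda Tx$ of Remark \ref{7sssssssssssssss} is well defined. The crucial step is to show that $T_{\lambda}$ is a contraction on $X$. For $x,y \in X$ I would rewrite
\[
T_{\lambda}x - T_{\lambda}y = (1-\lambda)(x-y) + \lambda(Tx-Ty) = \bigl(1 - \lambda(1+b)\bigr)(x-y) + \lambda\bigl(b(x-y) + Tx - Ty\bigr),
\]
so that the $b$-modified enriched term appears explicitly. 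With $\lambda = \frac{1}{b+1}$ the coefficient $1-\lambda(1+b)$ is zero, hence $T_{\lambda}x - T_{\lambda}y = \frac{1}{b+1}\bigl(b(x-y) + Tx - Ty\bigr)$, and the defining inequality (\ref{modifenrichednon}) gives $\|T_{\lambda}x - T_{\lambda}y\| \le \frac{1}{b+1}\|x-y\|$. Because $b \neq 0$ we have $\frac{1}{b+1} < 1$, so $T_{\lambda}$ is a contraction with constant $\frac{1}{b+1}$.

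From here the conclusions follow at once. Since $X$ is a Banach space, the Banach contraction principle applied to $T_{\lambda}$ produces a unique point $x^{\ast} \in X$ with $T_{\lambda}x^{\ast} = x^{\ast}$, and the Picard iterates of $T_{\lambda}$ converge to $x^{\ast}$ from an arbitrary $x_{0} \in X$. By Remark \ref{7sssssssssssssss} one has $Fix(T_{\lambda}) = Fix(T)$, whence $Fix(T) = \{x^{\ast}\}$, giving (1). For (2), observe that the Picard iteration $x_{n+1} = T_{\lambda}x_{n}$ is literally the Krasnoselskii iteration $x_{n+1} = (1-\lambda)x_{n} + \lambda Tx_{n}$ of (\ref{equationsss}) with this value of $\lambda$, so it converges to $x^{\ast}$ for every initial guess $x_{0}$.

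I do not expect a genuine obstacle in this argument; the only delicate point is the algebraic identity that exhibits $b(x-y) + Tx - Ty$ inside $T_{\lambda}x - T_{\lambda}y$, together with the verification that the chosen $\lambda$ simultaneously annihilates the leftover coefficient and lies in $(0,1)$ --- both of which rely on $b > 0$. In fact the same computation shows $\|T_{\lambda}x - T_{\lambda}y\| \le \bigl(|1-\lambda(1+b)| + \lambda\bigr)\|x-y\|$ for every $\lambda \in (0,1)$, and the right-hand coefficient is strictly less than $1$ precisely for $\lambda \in \bigl(0,\tfrac{2}{2+b}\bigr)$; the value $\lambda = \frac{1}{b+1}$ is merely the most convenient choice. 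This is also exactly where the hypothesis $b \neq 0$ enters: for $b = 0$ the condition (\ref{modifenrichednon}) is plain nonexpansiveness, the contraction factor degenerates to $1$, and one can no longer expect either uniqueness of the fixed point or norm convergence of the Krasnoselskii iterates in a general Banach space.
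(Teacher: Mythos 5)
Your argument is correct and is essentially the same as the paper's: both choose $\lambda=\frac{1}{b+1}$, observe that the $b$-modified enriched nonexpansive inequality is exactly the statement that the averaged mapping $T_{\lambda}$ is a Banach contraction with constant $\lambda<1$, and then conclude via the Banach contraction principle together with $Fix(T_{\lambda})=Fix(T)$ and the identification of the Krasnoselskii iteration with the Picard iteration of $T_{\lambda}$. Your closing remark about the admissible range of $\lambda$ is a harmless addition not present in the paper.
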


\begin{proof}
Let us denote $\lambda =\frac{1}{b+1}.$ Obviously, $\lambda \in (0,1)$ and
the $b$-modified enriched nonexpansive (\ref{modifenrichednon}) becomes
\begin{equation*}
\left\Vert (1-\lambda )x+Tx-((1-\lambda )y+Ty)\right\Vert \leq \lambda
\left\Vert x-y\right\Vert ,\ \ \forall \ x,y\in X,
\end{equation*}%
which can be written in an equivalent form as
\begin{equation}
\left\Vert T_{\lambda }x-T_{\lambda }y\right\Vert \leq \lambda \left\Vert
x-y\right\Vert ,\ \ \forall \ x,y\in X,  \label{ssssssssssssssssss}
\end{equation}%
where $T_{\lambda }$ is the averaged mapping defined in (\ref{averg}). Since
$\lambda \in (0,1)$,  (\ref{ssssssssssssssssss}) gives that $T_{\lambda }$
is a Banach contraction mapping.\newline
In view of (\ref{averg}), the Krasnoselskij iterative process $%
\{x_{n}\}_{n=0}^{\infty },$ defined by (\ref{equationsss}) is exactly the
Picard iteration associated with $T_{\lambda }$, that is,
\begin{equation}
x_{n+1}=T_{\lambda }x_{n},n\geq 0.  \label{Pics}
\end{equation}%
Since $X$ is a Banach space and $T_{\lambda }$ is a Banach contraction
mapping on $X$, it follows from the Banach fixed-point theorem (Theorem 2.1
in \cite{Berinde}) in the setting of Banach spaces that $T_{\lambda }$ has a
unique fixed point, denoted as $x^{\ast }\in X$, and the Picard iteration
associated with $T_{\lambda }$ as defined in (\ref{Pics}) converges to $%
x^{\ast }.$\newline
Also, noting from (\ref{sskkahahshh}), we observe that $T$ has a unique
fixed point since $T_{\lambda }$ does. This completes the proof.
\end{proof}

From Theorem \ref{MainThem1}, we do not derive Theorem \ref{BGK} as a direct
consequence. This is because Theorem \ref{MainThem1} is established under
the assumption that $b\neq 0$. The rationale behind imposing the condition $%
b\neq 0$ in Theorem \ref{MainThem1} is to establish a minimal requirement on
the structure of $X$ to ensure that every $b$-modified enriched nonexpansive
mapping $T$ defined on $X$ possesses at least one fixed point. This
condition is met when $X$ is a Banach space. To extend this conclusion to
all $b\in[0,+\infty)$, we present our subsequent theorem within the
framework of Hilbert spaces.

\begin{theorem}
\label{Main2}  Let $C$ be a closed bounded convex subset of the Hilbert
space $H $ and $T:C\rightarrow C$ a $b$-modified enriched nonexpansive
mapping. Then $T$ has at least one fixed point.
\end{theorem}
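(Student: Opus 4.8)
The plan is to reduce Theorem \ref{Main2} to the Browder--Goede--Kirk theorem (Theorem \ref{BGK}) by means of the same averaging device used in the proof of Theorem \ref{MainThem1}, but now splitting the argument into two cases according to whether $b=0$ or $b\neq 0$. I would begin with the degenerate case $b=0$: here condition (\ref{modifenrichednon}) reads $\left\Vert Tx-Ty\right\Vert \leq \left\Vert x-y\right\Vert$, so $T$ is nonexpansive on $C$, and Theorem \ref{BGK} applies verbatim to produce a fixed point of $T$.

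For the case $b\neq 0$, I would set $\lambda =\frac{1}{b+1}\in (0,1)$ and pass to the averaged mapping $T_{\lambda }x=(1-\lambda )x+\lambda Tx$ of (\ref{averg}). Since $C$ is convex and $x,Tx\in C$ for every $x\in C$, the map $T_{\lambda }$ sends $C$ into $C$. As already computed in the proof of Theorem \ref{MainThem1}, (\ref{modifenrichednon}) is equivalent to $\left\Vert T_{\lambda }x-T_{\lambda }y\right\Vert \leq \lambda \left\Vert x-y\right\Vert$; in particular $T_{\lambda }$ is nonexpansive (in fact a contraction) on the closed bounded convex set $C$, so Theorem \ref{BGK} (or, equivalently, the Banach fixed point theorem, since $H$ is a Banach space, which is just Theorem \ref{MainThem1}) gives a fixed point $x^{\ast }\in C$ of $T_{\lambda }$. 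I would then conclude by invoking (\ref{sskkahahshh}) in Remark \ref{7sssssssssssssss}, which yields $Fix(T_{\lambda })=Fix(T)$ and hence $Tx^{\ast }=x^{\ast }$.

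The step requiring the most care is the choice of which auxiliary mapping to feed into Theorem \ref{BGK}. The linearized mapping $Sx=bx+Tx$ from the earlier remark does \emph{not} in general map $C$ into $C$, so Theorem \ref{BGK} cannot be applied to $S$ directly; the remedy is to use the normalized mapping $T_{\lambda }=\frac{1}{b+1}S$, which is a convex combination of $x$ and $Tx$ and therefore preserves $C$. The other delicate point is that this averaging trick degenerates at $b=0$ (there $\lambda =1$, $T_{\lambda }=T$, and Remark \ref{7sssssssssssssss} does not apply), which is precisely why the case $b=0$ must be disposed of separately by applying Theorem \ref{BGK} to $T$ itself.
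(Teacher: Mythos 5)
Your proposal is correct and follows essentially the same route as the paper: case $b=0$ handled by Theorem \ref{BGK} directly, and case $b\neq 0$ handled by passing to the averaged contraction $T_{\lambda}$ with $\lambda=\frac{1}{b+1}$ and invoking the Banach fixed point theorem together with $Fix(T_{\lambda})=Fix(T)$. Your explicit verification that $T_{\lambda}$ maps $C$ into $C$ (via convexity) is a small point the paper leaves implicit, but it does not change the argument.
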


\begin{proof}
We divide the proof into the following two cases.\newline
Case 1. $b=0.$ In this case, the $b$-modified enriched nonexpansive mapping (%
\ref{modifenrichednon}) becomes
\begin{equation}
\left\Vert Tx-Ty\right\Vert \leq \left\Vert x-y\right\Vert ,\ \ \ \forall \
x,y\in C,  \label{b}
\end{equation}%
That is, $T$ is an nonexpansive mapping and hence by Theorem \ref{BGK}, $T$
has at least one fixed point.\newline
Case $2$. If $b>0$. Then $\lambda =\frac{1}{b+1}\in (0,1).$ Following
arguments similar to those given in the proof of Theorem \ref{MainThem1}, we
establish that $T_{\lambda }$ is a Banach contraction on $C$. Furthermore,
since $C$ is a closed subset of a Hilbert space, it is complete. The result
then follows by the Banach fixed-point theorem. This implies that $%
T_{\lambda }$ possesses a unique fixed point, as indicated by (\ref%
{sskkahahshh}). Therefore, $T$ also possesses a unique fixed point.
\end{proof}

\begin{remark}
We can derive Theorem \ref{BGK} by setting $b=0$ in Theorem \ref{Main2}.
This results in obtaining \ref{BGK} as a corollary of our main result.
\end{remark}

\section*{Competing Interests}

The authors declare that they have no competing interests.

\end{document}